\documentclass[12pt]{amsart}

\usepackage[utf8]{inputenc}
\usepackage[T1]{fontenc}
\usepackage[margin=1in]{geometry}
\usepackage{amsmath,amssymb,amsthm,amsfonts,mathrsfs}
\usepackage{hyperref}
\usepackage{xcolor}
\usepackage{enumitem}
\usepackage{mathtools}

\hypersetup{
  colorlinks=true,
  linkcolor=blue,
  citecolor=blue,
  urlcolor=blue
}

\newtheorem{theorem}{Theorem}[section]
\newtheorem{proposition}[theorem]{Proposition}
\newtheorem{lemma}[theorem]{Lemma}
\newtheorem{corollary}[theorem]{Corollary}
\newtheorem{definition}[theorem]{Definition}
\newtheorem{remark}[theorem]{Remark}
\newtheorem{example}[theorem]{Example}

\newcommand{\A}{\mathbb{A}}
\newcommand{\C}{\mathbb{C}}
\newcommand{\GM}{\mathbb{G}_{\mathrm{m}}}
\newcommand{\GA}{\mathbb{G}_{\mathrm{a}}}
\newcommand{\Der}{\operatorname{Der}}
\newcommand{\Aut}{\operatorname{Aut}}
\newcommand{\Spec}{\operatorname{Spec}}
\newcommand{\LND}{\operatorname{LND}}
\newcommand{\SSD}{\operatorname{SSD}}
\newcommand{\rank}{\operatorname{rank}}
\newcommand{\ML}{\operatorname{ML}}
\newcommand{\id}{\mathrm{id}}

\title{On nice $\GM$-actions arising from locally nilpotent
       derivations with slice}

\author{Luis Cid}
\address{Instituto de Matem\'atica y F\'isica, Universidad de
  Talca, Casilla 721, Talca, Chile}
\email{luis.cid@inst-mat.utalca.cl}

\date{\today}

\subjclass[2020]{14R20, 14R25, 13N15}
\keywords{locally nilpotent derivation, slice, semisimple
  derivation, $\GM$-action, linearizability}

\begin{document}

\begin{abstract}
Let $k$ be an algebraically closed field of characteristic zero
and $B$ a finitely generated $k$-domain.  Given a locally nilpotent
derivation $D$ on $B$ admitting a slice $s$, the derivation
$\partial=NsD$ ($N\in\mathbb{Z}\setminus\{0\}$) is semisimple and
defines a regular $\GM$-action on $\Spec(B)$.  We show that this
derivation provides a new explicit description of the $\GM$-action
introduced by Freudenburg in terms of the infinitesimal generator
$\partial=NsD$.  In the nice case ($D^2(x_i)=0$ for all
generators), we prove a linearizability criterion: the associated
$\GM$-action is linearizable if and only if $D$ is automorphically
conjugate to $\frac{\partial}{\partial x_n}$ and the slice becomes
affine-linear in the distinguished variable; moreover, this
criterion is independent of the choice of slice.
\end{abstract}

\maketitle

\section{Introduction}

Let $k$ be an algebraically closed field of characteristic zero,
$B$ a finitely generated $k$-domain, and $X=\Spec(B)$.  Locally
nilpotent derivations (LNDs) on $B$ correspond to regular
$\GA$-actions on $X$.  When a LND admits a \emph{slice}, i.e.,
an element $s\in B$ with $D(s)=1$, the Slice Theorem gives
\[
  B = \ker(D)[s],
\]
making derivations with slice a natural testing ground for
questions on automorphisms and algebraic group actions.

Starting from a LND $D$ with slice $s$ we consider the derivation
\[
  \partial = NsD,\quad N\in\mathbb{Z}\setminus\{0\}.
\]
Since $s$ has eigenvalue $N$ for $\partial$ and $\ker(D)$ is
pointwise fixed, $\partial$ is the \emph{multiplicative
counterpart} of the additive action defined by $D$.  A related
construction appears in Freudenburg~\cite{Fre06}; one of our goals
is to identify his action with the one defined by $\partial$.

We are especially interested in the \emph{nice} case, where
$D^2(x_i)=0$ for every generator $x_i$ of $B$.  Freudenburg asked
whether every nice $\GM$-action arising from a LND with slice is
linearizable.  We give a precise criterion answering this question.
Throughout Sections~\ref{sec:linearization}--\ref{sec:applications}
we work with $B=k[x_1,\dots,x_n]$; the associated $\GM$-action
$\alpha$ on $\A^n=\Spec(B)$ is defined by
$\alpha_t(x_i)=x_i-(1-t^N)s\,D(x_i)$ in the nice case
(see Corollary~\ref{cor:associated-Gm-action} for the general
formula).

\medskip
\noindent\textbf{Main Theorem}
(Theorem~\ref{thm:linearizability-criterion}).
\emph{The following are equivalent:
\begin{enumerate}
\item[\textup{(1)}] The $\GM$-action $\alpha$ is linearizable.
\item[\textup{(2)}] There exists $\phi\in\Aut(B)$ with
  $\phi D\phi^{-1}=\frac{\partial}{\partial x_n}$ and $\phi(s)=x_n$.
\item[\textup{(3)}] There exist $\phi\in\Aut(B)$ and
  $p\in k[x_1,\dots,x_{n-1}]$ with
  $\phi D\phi^{-1}=\frac{\partial}{\partial x_n}$ and
  $\phi(s)=x_n+p$.
\end{enumerate}}
\medskip

The Main Theorem reduces Freudenburg's question entirely to the
study of nice LNDs with slice: the associated $\GM$-action is
linearizable if and only if the underlying nice LND is
automorphically conjugate to a partial derivative
$\frac{\partial}{\partial x_n}$.  This reformulation makes it clear that
the obstruction to linearizability, if any, lies entirely in the
automorphism group of $B$ and not in the structure of the
$\GM$-action itself.

Beyond this criterion we establish the following results.
The \textbf{independence of the slice}
(Theorem~\ref{thm:slice-independence}) shows that if $s$ and $s'$
are two slices of $D$, the corresponding $\GM$-actions are conjugate
by an explicit automorphism, so linearizability does not depend on
the choice of slice.  A \textbf{geometric reformulation}
(Corollary~\ref{cor:kernel-criterion}) states that the action is
linearizable if and only if $\ker(D)$ maps onto a coordinate
hyperplane ring and the slice maps to a translate of the
complementary variable.  For the \textbf{resolution in small
dimensions}, the Main Theorem yields a complete answer for $n\le 3$:
for $n=1$ the result is immediate since every LND with slice on
$k[x]$ is already $\frac{\partial}{\partial x}$; for $n=2$
(Corollary~\ref{cor:dim2}), using Rentschler's
theorem~\cite[Theorem~5.1.5]{Fre06} on LNDs of $k[x,y]$ and the Jung--van der Kulk
theorem~\cite[Theorem~5.1.10]{Fre06}, every nice LND with slice on
$k[x,y]$ is conjugate to $\frac{\partial}{\partial y}$ and the
associated $\GM$-action is linearizable; for $n=3$
(Corollaries~\ref{cor:dim3-wang} and~\ref{cor:wang-general}),
Wang's algebraic classification~\cite{Wan05} implies directly that
every nice LND with slice in $k[x,y,z]$ is conjugate to
$\frac{\partial}{\partial z}$, and over $k=\C$ the same conclusion
follows independently from the theorem of Kaliman, Koras,
Makar-Limanov, and Russell~\cite{KKMR97,KR97}; Wang's result also
covers the larger class $D^2x=D^2y=0$.  Finally, the
\textbf{obstruction via the Makar-Limanov invariant}
(Proposition~\ref{prop:ML-obstruction}) provides a concrete
computational tool for $n\ge 4$: if one can exhibit a regular
function on $\Spec(B)$ invariant under every $\GA$-action but not
a scalar, then $\ML(B)\supsetneq k$ and $\alpha$ cannot be
linearizable.

The article is organized as follows.  Section~\ref{sec:prelim}
recalls the necessary background.  Section~\ref{sec:semisimple}
constructs the semisimple derivation associated to a LND with
slice.  Section~\ref{sec:freudenburg} identifies it with
Freudenburg's action.  Section~\ref{sec:linearization} proves the
Main Theorem.  Section~\ref{sec:applications} contains all further
results, unifying the resolution for $n\le 3$ and the
additive--multiplicative comparison.

\section{Preliminaries}\label{sec:prelim}

Throughout, $k$ denotes an algebraically closed field of
characteristic zero and $B$ a finitely generated $k$-domain.

\subsection{Locally nilpotent and semisimple derivations}

\begin{definition}
$D\in\Der_k(B)$ is \emph{locally nilpotent} if for every $b\in B$
there exists $n\ge 1$ with $D^n(b)=0$.  We write $\LND(B)$ for
the set of all such derivations.
\end{definition}

\begin{definition}
$D\in\Der_k(B)$ is \emph{semisimple} if
$B=\bigoplus_{\lambda\in k}B_\lambda$ where
$B_\lambda=\{b\in B\mid D(b)=\lambda b\}$.  We write $\SSD(B)$
for the set of all such derivations.
\end{definition}

\begin{remark}
If $D\in\SSD(B)$ then $\ker(D)=B_0$.
\end{remark}

\begin{remark}
We denote by $\SSD^\ast(B)\subset\SSD(B)$ the subset of
semisimple derivations with integer eigenvalues.  These are
precisely the derivations corresponding to regular $\GM$-actions
on $\Spec(B)$ (see Remark~\ref{rem:Gm-grading} below), and they
are the central objects of this article.
\end{remark}

\subsection{Slices and the Slice Theorem}

\begin{definition}
Let $D\in\LND(B)$.  An element $s\in B$ is a \emph{slice} for $D$
if $D(s)=1$.
\end{definition}

\begin{theorem}[Slice Theorem; {\cite[Theorem~1.3.21]{Fre06}}]
\label{thm:slice}
If $D\in\LND(B)$ admits a slice $s$ then $B=\ker(D)[s]$.
\end{theorem}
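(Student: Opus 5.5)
The plan is to prove the two halves of the Slice Theorem separately: first that $B=A[s]$, where $A=\ker(D)$, and then that $s$ is algebraically independent over $A$. The one tool is the Dixmier map $\pi_s\colon B\to B$,
\[
  \pi_s(b)=\sum_{i\ge 0}\frac{(-1)^i}{i!}\,D^i(b)\,s^i .
\]
Because $D$ is locally nilpotent, the sum is finite, so $\pi_s$ is well defined. It is $k$-linear, and the Leibniz rule shows it is a ring homomorphism. The key point is that $\pi_s(b)\in A$ for every $b\in B$.

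To check this, I would apply $D$ to the sum. Each term contributes
\[
  \frac{(-1)^i}{i!}\bigl(D^{i+1}(b)s^i+iD^i(b)s^{i-1}\bigr),
\]
using $D(s)=1$. Shifting the index in the second part, the terms cancel in pairs (the usual telescoping), so $D(\pi_s(b))=0$.

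Next comes the reconstruction formula
\[
  b=\sum_{j\ge 0}\frac{1}{j!}\,\pi_s\bigl(D^j(b)\bigr)\,s^j .
\]
I would prove it by strong induction on the nilpotency degree $\nu(b)=\min\{m: D^{m}(b)=0\}$. If $\nu(b)\le 1$, then $b\in A$ and $\pi_s(b)=b$. In general, the definition of $\pi_s$ gives
\[
  b=\pi_s(b)-\sum_{i\ge 1}\frac{(-1)^i}{i!}D^i(b)s^i .
\]
Each $D^i(b)$ with $i\ge1$ has smaller degree, so by induction it already lies in $A[s]$; hence $b\in A[s]$. If one only wants $B=A[s]$ as a ring, this induction is all that is needed and the explicit coefficients are unnecessary.

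For algebraic independence, suppose $\sum_{j=0}^m a_j s^j=0$ with $a_j\in A$ and $a_m\neq 0$, and take $m$ minimal. If $m=0$ this is immediate. Otherwise apply $D$: since $D(a_j)=0$ and $D(s)=1$, we get $\sum_j j a_j s^{j-1}=0$. This is a relation of degree $m-1$ whose leading coefficient is $m a_m\neq 0$, using characteristic zero. That contradicts minimality, so $B=A[s]$ is a polynomial ring in $s$ over $A$.

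I expect the main obstacle to be the bookkeeping in the telescoping identity $D\circ\pi_s=0$ and in checking that $\pi_s$ is multiplicative. Everything else is formal.
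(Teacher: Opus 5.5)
Your proof is correct, and it is the standard Dixmier-map argument. The paper does not prove this statement itself; it only cites Freudenburg's book, where the Slice Theorem is obtained by essentially this reasoning: $\pi_s$ lands in $\ker D$, one inducts on the $D$-degree, and differentiating a minimal relation gives the transcendence of $s$ over $\ker D$.

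Two small remarks. Multiplicativity of $\pi_s$ is never used; only $D(\pi_s(b))=0$ enters the induction. Also, your induction gives $b\in A[s]$ rather than the explicit reconstruction formula. If you want that formula, it follows by direct expansion using $\sum_{i+j=n}\binom{n}{i}(-1)^i=0$ for $n\ge 1$.
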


\begin{remark}\label{rem:slice-family}
If $s$ is a slice for $D$ then $s+a$ is also a slice for every
$a\in A:=\ker(D)$.  Conversely, any two slices differ by an
element of $A$.  Thus the set of slices of $D$ is the coset
$s+A$.
\end{remark}

\begin{remark}
Under the isomorphism $B\cong A[s]$ given by the Slice Theorem,
$D$ coincides with $\frac{d}{ds}$: for $b=\sum a_i s^i$ with
$a_i\in A$ one has $D(b)=\sum i\,a_i s^{i-1}$.
\end{remark}

\subsection{Algebraic group actions}

\begin{remark}
A LND $D\in\LND(B)$ determines a regular $\GA$-action on
$X=\Spec(B)$ via the exponential map
$\exp(tD)(b)=\sum_{j\ge 0}(t^j/j!)D^j(b)$, which is a finite
sum for every $b\in B$.
\end{remark}

\begin{remark}\label{rem:Gm-grading}
If $D\in\SSD^\ast(B)$, the decomposition
$B=\bigoplus_{m\in\mathbb{Z}}B_m$ (where $D(b)=mb$ for $b\in B_m$)
determines a regular $\GM$-action on $X$.  Concretely, the action
is obtained from the formal flow of $D$: for $\tau$ a formal
parameter and $t=e^\tau=\sum_{j\ge 0}\tau^j/j!$, one has
\[
  \alpha_t(b) = \exp(\tau D)(b)
  = \sum_{j\ge 0}\frac{\tau^j}{j!}D^j(b)
  = t^m b
  \quad\text{for } b\in B_m.
\]
Conversely, every regular $\GM$-action arises this way, and the
semisimple derivation $D$ is recovered as the infinitesimal
generator: $D=\mathrm{ev}_1\circ\tfrac{d}{dt}\circ\alpha^*$,
where $\alpha^*\colon B\to B[t^{\pm 1}]$ is the coaction;
see~\cite[Definition~2.5 and Theorem~2.14]{CiLi22}.
The invariant ring is $B^{\GM}=B_0=\ker(D)$.
\end{remark}

\section{The semisimple derivation associated with a slice}
\label{sec:semisimple}

Let $D\in\LND(B)$ admit a slice $s$, set $A:=\ker(D)$, so
$B=A[s]$ by Theorem~\ref{thm:slice}.  Fix $N\in\mathbb{Z}
\setminus\{0\}$ and define $\partial:=NsD$.

\begin{proposition}\label{prop:associated-semisimple}
The derivation $\partial=NsD$ is semisimple with
\[
  \partial(a)=0\;(a\in A),\quad \partial(s)=Ns,\quad
  \partial(as^m)=Nm\,as^m\;(a\in A,\,m\ge 0).
\]
\end{proposition}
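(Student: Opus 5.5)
The plan is to compute $\partial$ directly on the monomials $as^m$ and then use the Slice Theorem to get the eigenspace decomposition. First, $\partial=NsD$ is a derivation, since it is a $B$-multiple of the derivation $D$. For $a\in A=\ker(D)$ we get $\partial(a)=Ns\,D(a)=0$. For the slice, $\partial(s)=Ns\,D(s)=Ns$. By the Leibniz rule, for $a\in A$ and $m\ge 0$,
\[
  \partial(as^m)=Ns\bigl(D(a)s^m+a\,m s^{m-1}D(s)\bigr)=Ns\cdot m a s^{m-1}=Nm\,as^m ,
\]
where the case $m=0$ gives $0$ and agrees with $\partial(a)=0$. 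So each $As^m$ lies in the eigenspace $B_{Nm}$ of $\partial$.

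Next, Theorem~\ref{thm:slice} gives $B=A[s]=\sum_{m\ge0}As^m$. We need this sum to be direct, that is, $s$ is transcendental over $A$. Suppose $\sum_{m=0}^{d} a_m s^m=0$ with $a_d\neq0$ and $d\ge1$. Applying $D^d$ kills every term of degree below $d$ and sends $a_ds^d$ to $d!\,a_d$. This gives $d!\,a_d=0$, hence $a_d=0$ in characteristic zero, a contradiction. Therefore $B=\bigoplus_{m\ge0}As^m$.

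Since $N\neq0$, the eigenvalues $Nm$ for $m\ge0$ are pairwise distinct. Set $B_\lambda=As^m$ when $\lambda=Nm$ and $B_\lambda=0$ for all other $\lambda$. Then $B=\bigoplus_{\lambda}B_\lambda$ with $\partial=\lambda$ on each $B_\lambda$. Eigenvectors for distinct eigenvalues are linearly independent, so these are exactly the full eigenspaces of $\partial$. Hence $\partial$ is semisimple, and in fact lies in $\SSD^\ast(B)$ because its eigenvalues are integers.

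The only real step is the directness of the decomposition, which reduces to $s$ being algebraically independent over $A$. This is the standard content of the Slice Theorem, and the $D^d$ argument above handles it. Everything else is routine use of the Leibniz rule.
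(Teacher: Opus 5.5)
Your proof is correct and follows the paper's argument. You do the same eigenvalue computations on $A$, $s$ and $as^m$, using the Leibniz rule where the paper invokes $D=\frac{d}{ds}$ on $A[s]$, and then use the decomposition $B=\bigoplus_{m\ge0}As^m$ from the Slice Theorem. Your $D^d$ argument for directness correctly fills in what the paper takes from the Slice Theorem, though semisimplicity does not need it: $B=\sum_m As^m$ with $As^m\subseteq B_{Nm}$, and eigenspaces for the distinct eigenvalues $Nm$ are automatically independent, so directness and $B_{Nm}=As^m$ follow for free.
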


\begin{proof}
Since $A=\ker(D)$ we have $\partial(a)=NsD(a)=0$.  Since
$D(s)=1$ we get $\partial(s)=Ns$.  For $a\in A$ and $m\ge 0$,
using $D=\frac{d}{ds}$ on $A[s]$,
\[
  \partial(as^m)=Ns\cdot mas^{m-1}=Nm\,as^m.
\]
Every element of $B=A[s]=\bigoplus_{m\ge 0}As^m$ is a finite
sum of such monomials, so $\partial$ is semisimple.
\end{proof}

\begin{corollary}\label{cor:associated-Gm-action}
The derivation $\partial$ defines a regular $\GM$-action on
$\Spec(B)$ given by
\[
  \alpha_t(a)=a\;(a\in A),\qquad
  \alpha_t(P(s))=P(t^N s)\;(P\in A[T]).
\]
\end{corollary}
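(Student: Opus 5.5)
We combine Proposition~\ref{prop:associated-semisimple} with the dictionary of Remark~\ref{rem:Gm-grading} between semisimple derivations with integer eigenvalues and regular $\GM$-actions. The argument has three steps.

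First, we show that $\partial$ lies in $\SSD^\ast(B)$ and describe its grading. By Proposition~\ref{prop:associated-semisimple}, $\partial$ is semisimple, and $B=\bigoplus_{m\ge 0}As^m$. The eigenvalue on the summand $As^m$ is $Nm\in\mathbb{Z}$. So the eigenspace decomposition is
\[
B_{Nm}=As^m \quad (m\ge 0),
\]
and $B_j=0$ whenever $j$ is not a nonnegative multiple of $N$ (nonpositive multiple if $N<0$). Directness of the sum comes from the Slice Theorem (Theorem~\ref{thm:slice}): $s$ is transcendental over $A$, since $B\cong A[s]$ is a polynomial ring. Hence $\partial\in\SSD^\ast(B)$. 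By Remark~\ref{rem:Gm-grading}, $\partial$ determines a regular $\GM$-action $\alpha$ on $\Spec(B)$, with coaction $\alpha^*\colon B\to B[t^{\pm1}]$ given by $b\mapsto t^{j}b$ for $b\in B_j$.

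Second, we compute the action explicitly. For $a\in A=B_0$ we get $\alpha_t(a)=a$. For a homogeneous element $as^m\in B_{Nm}$ we get $\alpha_t(as^m)=t^{Nm}as^m=a(t^Ns)^m$. Take a general $P=\sum_m a_mT^m\in A[T]$. Since $\alpha_t$ is additive (indeed a $k$-algebra homomorphism), we obtain
\[
\alpha_t(P(s))=\sum_m a_m t^{Nm}s^m=P(t^Ns).
\]

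Third, we check that this really is a regular action and not merely a formal one. The map $B\to B[t^{\pm1}]$ determined by $a\mapsto a$ and $s\mapsto t^Ns$ is a well-defined $k$-algebra homomorphism, because $B=A[s]$ is free over $A$ on the basis of powers of $s$. It satisfies the coassociativity and counit axioms: these can be verified on the generators $A\cup\{s\}$, where they reduce to $t^N u^N=(tu)^N$ and $1^N=1$. Regularity could also simply be quoted from Remark~\ref{rem:Gm-grading}.

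There is no real obstacle. The only point needing care is the identification of the graded pieces, namely that $As^m$ is exactly the $Nm$-eigenspace. This depends on the freeness of $B$ over $A$ in $s$, which the Slice Theorem supplies.
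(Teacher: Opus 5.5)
Your proposal is correct and follows essentially the same route as the paper: identify the weight-$Nm$ piece of $\partial$ as $As^m$ via Proposition~\ref{prop:associated-semisimple}, invoke Remark~\ref{rem:Gm-grading} to get the regular $\GM$-action, and compute $\alpha_t(as^m)=t^{Nm}as^m=a(t^Ns)^m$, extending additively to $P(s)$. Your third step, checking the coaction axioms directly on the generators $A\cup\{s\}$, is a harmless addition; the paper leaves that verification to the remark.
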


\begin{proof}
By Proposition~\ref{prop:associated-semisimple}, the eigenvalues
of $\partial$ on $B=A[s]$ are the integers $Nm$, $m\ge 0$.  In
particular all eigenvalues are integers, so by
Remark~\ref{rem:Gm-grading} the derivation $\partial$ defines a
regular $\GM$-action.  Since $\alpha_t$ acts on the weight-$Nm$
space by multiplication by $t^{Nm}$, we get
$\alpha_t(as^m)=t^{Nm}as^m=a(t^N s)^m$ for all $a\in A$,
$m\ge 0$, which gives the stated formulas.
\end{proof}

\begin{remark}
$\ker(\partial)=A=\ker(D)$: indeed, $\partial(as^m)=Nm\,as^m=0$
if and only if $m=0$ or $a=0$, so the zero-eigenspace consists
precisely of the elements of $A=As^0$.
\end{remark}

\begin{example}
Let $D=\frac{\partial}{\partial x_n}\in\LND(k[x_1,\dots,x_n])$,
$s=x_n$.  Then $\partial=Nx_n\frac{\partial}{\partial x_n}$ is
semisimple and
$t\cdot(x_1,\dots,x_n)=(x_1,\dots,x_{n-1},t^N x_n)$.
\end{example}

\section{Relation with Freudenburg's action}\label{sec:freudenburg}

With notation as above, Freudenburg~\cite[Theorem~10.30]{Fre06}
defines a $\GM$-action by
\[
  \beta_t := \exp(-\lambda D)\big|_{\lambda=(1-t^N)s},
\]
where $\lambda$ is an indeterminate over $B$ and $D$ is extended
by $D(\lambda)=0$.

\begin{lemma}\label{lem:translation-formula}
For every $P(T)\in A[T]$,\quad
$\exp(-\lambda D)(P(s))=P(s-\lambda)$.
\end{lemma}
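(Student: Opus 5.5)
The plan is to work in the ring $B[\lambda]$, where $\lambda$ is an indeterminate, and extend $D$ by $D(\lambda)=0$. This extension is still a locally nilpotent derivation with kernel $A[\lambda]$, and $s$ is still a slice for it. By the remark identifying $D$ with $\frac{d}{ds}$, we have $B[\lambda]=A[\lambda][s]$, and on this ring $D$ acts as $\frac{d}{ds}$ with $A[\lambda]$ as the ring of constants.

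With this setup, $\exp(-\lambda D)$ is a well-defined map, because the defining sum is finite on every element. It is also a ring endomorphism of $B[\lambda]$, since the exponential of a locally nilpotent derivation is multiplicative. This follows from the Leibniz rule, $D^j(fg)=\sum_{i}\binom{j}{i}D^i(f)D^{j-i}(g)$, and the binomial identity for the double sum. The scalar $-\lambda$ commutes with $D$ because $D(\lambda)=0$, so multiplying $D$ by it causes no difficulty.

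Next, $\exp(-\lambda D)$ fixes $A[\lambda]$ pointwise, since $D$ vanishes there and only the $j=0$ term survives. On the slice we have $D(s)=1$ and $D^j(s)=0$ for $j\ge2$, so
\[
  \exp(-\lambda D)(s)=s-\lambda .
\]
Since the map is a ring homomorphism that is $A$-linear, it follows that for $P(T)=\sum a_iT^i$,
\[
  \exp(-\lambda D)(P(s))=\sum a_i(s-\lambda)^i=P(s-\lambda).
\]

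As an alternative check, one can verify the identity directly on monomials: $D^j(s^m)=\frac{m!}{(m-j)!}s^{m-j}$, so $\sum_j \frac{(-\lambda)^j}{j!}D^j(s^m)=\sum_j\binom{m}{j}(-\lambda)^js^{m-j}=(s-\lambda)^m$ by the binomial theorem, and then extend by $A$-linearity.

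The only step that needs real care is the multiplicativity of $\exp(-\lambda D)$. To avoid it entirely, I would present the monomial computation as the actual proof, since it uses only $A$-linearity of $D$ and the binomial theorem.
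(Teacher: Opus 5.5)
Your proposal is correct, and the monomial computation you settle on (computing $D^j(s^m)=\frac{m!}{(m-j)!}s^{m-j}$, applying the binomial theorem to get $(s-\lambda)^m$, then extending by $A$-linearity) is exactly the paper's proof. Your first route, via multiplicativity of $\exp(-\lambda D)$ and $\exp(-\lambda D)(s)=s-\lambda$, is also valid, but it is not needed.
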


\begin{proof}
Using $D|_A=0$, $D(s)=1$, and the binomial theorem,
\[
  \exp(-\lambda D)(s^m)
  =\sum_{i=0}^m\binom{m}{i}(-\lambda)^i s^{m-i}
  =(s-\lambda)^m.\qedhere
\]
\end{proof}

\begin{theorem}\label{thm:Freudenburg-coincides}
Freudenburg's $\GM$-action $\beta$ coincides with $\alpha$:
\[
  \beta_t(a)=a\;(a\in A),\qquad
  \beta_t(P(s))=P(t^N s)\;(P\in A[T]).
\]
\end{theorem}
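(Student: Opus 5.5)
The plan is to compute $\beta_t$ directly with Lemma~\ref{lem:translation-formula} and then compare the result with the formula for $\alpha_t$ from Corollary~\ref{cor:associated-Gm-action}. The one delicate point is how to read the substitution $\lambda=(1-t^N)s$. We take the definition to mean the following: first apply $\exp(-\lambda D)$ in $B[\lambda]$, treating $\lambda$ as a constant, and only afterwards specialize $\lambda\mapsto(1-t^N)s$ through the evaluation homomorphism $B[\lambda]\to B[t^{\pm1}]$. The specialization must come after the exponential. If one substituted first, $D$ would see $\lambda$ as $(1-t^N)s$, which is not $D$-constant, and the answer would change.

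With this convention, $\exp(-\lambda D)$ is a $k[\lambda]$-algebra automorphism of $B[\lambda]$, since $\lambda D$ is a locally nilpotent derivation of $B[\lambda]$ killing $\lambda$. It fixes $A$ pointwise because $D|_A=0$, so $\beta_t(a)=a$ for $a\in A$. For $P\in A[T]$, Lemma~\ref{lem:translation-formula} gives $\exp(-\lambda D)(P(s))=P(s-\lambda)$. Specializing $\lambda$ yields
\[
  \beta_t(P(s)) = P\bigl(s-(1-t^N)s\bigr)=P(t^N s).
\]
By the Slice Theorem~\ref{thm:slice}, every element of $B$ has the form $P(s)$, so this determines $\beta_t$ completely. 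It is the same formula as $\alpha_t$ in Corollary~\ref{cor:associated-Gm-action}, so $\beta=\alpha$ as coactions $B\to B[t^{\pm1}]$.

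The main obstacle is the justification of the substitution. The evaluation $\lambda\mapsto(1-t^N)s$ is a ring homomorphism on $B[\lambda]$, so the composite map $B\to B[t^{\pm1}]$ is a $k$-algebra map, and the computation above only ever applies it to the polynomial $P(s-\lambda)$. Once $\beta=\alpha$ is established, the group-action axioms for $\beta$ follow at once from those of $\alpha$, which Corollary~\ref{cor:associated-Gm-action} already provides. As a sanity check in the nice case: if $D^2(x_i)=0$, then $\beta_t(x_i)=x_i-(1-t^N)sD(x_i)$, which matches the formula quoted in the introduction.
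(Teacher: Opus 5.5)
Your proof is correct and matches the paper's argument: apply Lemma~\ref{lem:translation-formula} in $B[\lambda]$, then specialize $\lambda=(1-t^N)s$ to get $P(t^N s)$, which is the formula for $\alpha_t$ in Corollary~\ref{cor:associated-Gm-action}. You also make explicit two points the paper leaves implicit: the specialization must be done after exponentiating, and $\exp(-\lambda D)$ fixes $A$ pointwise, which gives $\beta_t(a)=a$.
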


\begin{proof}
By Lemma~\ref{lem:translation-formula}, evaluating at
$\lambda=(1-t^N)s$:
$\beta_t(P(s))=P(s-(1-t^N)s)=P(t^N s)$.
\end{proof}

\begin{corollary}\label{cor:infinitesimal-generator}
The infinitesimal generator of Freudenburg's $\GM$-action is
$\partial=NsD$.
\end{corollary}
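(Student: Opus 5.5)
By Theorem~\ref{thm:Freudenburg-coincides}, Freudenburg's action $\beta$ is equal to the action $\alpha$ of Corollary~\ref{cor:associated-Gm-action}. Remark~\ref{rem:Gm-grading} tells us how to recover the infinitesimal generator from the coaction, namely as $\mathrm{ev}_1\circ\frac{d}{dt}\circ\beta^*$. So the plan is to compute this composite on a spanning set of $B$ and compare the result with $\partial=NsD$.

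\textbf{Step 1.} We first reduce to monomials. By Theorem~\ref{thm:slice} we have $B=A[s]=\bigoplus_{m\ge 0}As^m$. Both the generator and $\partial$ are $k$-linear derivations, so it suffices to check that they agree on the elements $as^m$ with $a\in A$ and $m\ge 0$.

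\textbf{Step 2.} Next we compute the coaction. By Theorem~\ref{thm:Freudenburg-coincides}, $\beta^*(as^m)=a\,t^{Nm}s^m\in B[t^{\pm1}]$. Differentiating in $t$ gives $Nm\,t^{Nm-1}as^m$, and evaluating at $t=1$ gives $Nm\,as^m$. By Proposition~\ref{prop:associated-semisimple}, this is exactly $\partial(as^m)$. In particular $a\mapsto 0$ and $s\mapsto Ns$.

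\textbf{Step 3.} Extending by linearity, the two operators agree on all of $B$. As a consistency check, this also shows the generator is semisimple with weight spaces $As^m$ of weight $Nm$, as Remark~\ref{rem:Gm-grading} predicts.

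The only point needing care is the case $N<0$. There $t^{Nm}$ is a Laurent monomial, so we must differentiate in $B[t^{\pm1}]$ rather than $B[t]$; formal differentiation of Laurent polynomials handles this without change. A second possible objection is that Freudenburg's formula involves substituting $\lambda=(1-t^N)s$ into $\exp(-\lambda D)$, which is not literally a coaction. This is resolved by Theorem~\ref{thm:Freudenburg-coincides}, which already identifies $\beta_t$ with the explicit graded action, so no further work is needed there.
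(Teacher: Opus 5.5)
Your proposal is correct and essentially matches the paper's proof: both differentiate the explicit formula $\beta_t(P(s))=P(t^N s)$ from Theorem~\ref{thm:Freudenburg-coincides} at $t=1$ and compare the result with $\partial$. The only difference is where the comparison happens. The paper checks just $A$ and $s$, then uses that an $A$-linear derivation of $A[s]$ is determined by its value on $s$. You check every element of the spanning set $\{as^m\}$, so you need only $k$-linearity of the generator; the Leibniz property you cite in Step~1 is not actually used.
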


\begin{proof}
Differentiating $\beta_t(s)=t^N s$ at $t=1$ gives $Ns$, and
differentiating $\beta_t(a)=a$ at $t=1$ gives $0$ for every
$a\in A$.  Since $B=A[s]$ is a polynomial algebra over $A$ in
the single variable $s$, a derivation of $B$ that is $A$-linear
is entirely determined by its value on $s$.  As $\partial$ is
$A$-linear (it fixes $A$ pointwise) and satisfies $\partial(s)=Ns$,
we conclude that the infinitesimal generator of $\beta$ is
$\partial=NsD$.
\end{proof}

\section{The nice case and a linearizability criterion}
\label{sec:linearization}

Throughout this section $B=k[x_1,\dots,x_n]$,
$D\in\LND(B)$ with slice $s$, $\partial=NsD$, and $\alpha$ is the
associated $\GM$-action on $\A^n=\Spec(B)$.

\begin{definition}
$D$ is \emph{nice} if $D^2(x_i)=0$ for all $i=1,\dots,n$.
The associated $\GM$-action is then also called \emph{nice}.
\end{definition}

\begin{remark}
If $D$ is nice then $\exp(-\lambda D)(x_i)=x_i-\lambda D(x_i)$,
so
\[
  \alpha_t(x_i) = x_i-(1-t^N)s\,D(x_i),\quad i=1,\dots,n.
\]
\end{remark}

The following lemma is the key algebraic input for the criterion.
Its proof uses the unique factorization in $k[x_1,\dots,x_n]$, and
we make this explicit to close a gap present in earlier arguments.

\begin{lemma}\label{lem:linear-factorization}
Let $E=\sum_{i=1}^n\lambda_i x_i\frac{\partial}{\partial x_i}$ be a
diagonal linear semisimple derivation of $k[x_1,\dots,x_n]$.
Suppose there exist $\delta\in\LND(B)$ and $\sigma\in B$ with
$E=N\sigma\delta$ and $\delta(\sigma)=1$.  Then, after a linear
change of coordinates,
\[
  E = Nx_n\frac{\partial}{\partial x_n},\quad
  \delta=\frac{\partial}{\partial x_n},\quad
  \sigma=x_n.
\]
\end{lemma}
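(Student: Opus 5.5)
The plan is to compare two eigenspace decompositions of $B=k[x_1,\dots,x_n]$ for the same semisimple derivation $E$. The first comes from the diagonal form of $E$. The second comes from the slice structure $B=A[\sigma]$ with $A:=\ker(\delta)$, given by Theorem~\ref{thm:slice}. By Proposition~\ref{prop:associated-semisimple}, applied to $\delta$ and $\sigma$, we have $E=N\sigma\delta$ and
\[
  B=\bigoplus_{m\ge 0}A\sigma^m,\qquad E|_{A\sigma^m}=Nm\cdot\id .
\]
Since these are eigenspaces of the same operator for distinct eigenvalues, the eigenspace $B_\mu$ of $E$ for an eigenvalue $\mu\in k$ equals $A\sigma^{m}$ if $\mu=Nm$ with $m\in\mathbb{Z}_{\ge 0}$, and is $0$ otherwise. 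Also $\sigma\notin k$, because $\delta(\sigma)=1\neq 0$.

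\textbf{Placing the coordinates.} Each coordinate $x_i$ is a nonzero eigenvector of $E$ with eigenvalue $\lambda_i$. Hence $\lambda_i=Nm_i$ for some $m_i\in\mathbb{Z}_{\ge0}$, and $x_i=a_i\sigma^{m_i}$ with $a_i\in A$. I then use unique factorization in $k[x_1,\dots,x_n]$. The variable $x_i$ is irreducible and $\sigma$ is a nonunit, so $m_i\ge1$ forces $m_i=1$ and $a_i\in k^\ast$, that is, $x_i=a_i\sigma$ with $\lambda_i=N$. Otherwise $m_i=0$, so $\lambda_i=0$ and $x_i\in A$. In particular every $\lambda_i$ lies in $\{0,N\}$.

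\textbf{Exactly one nonzero weight.} If every $\lambda_i$ were $0$, then $A\supseteq k[x_1,\dots,x_n]=B$, so $\delta=0$, contradicting $\delta(\sigma)=1$. If two indices $i\neq j$ had $\lambda_i=\lambda_j=N$, then $x_i$ and $x_j$ would both be nonzero scalar multiples of $\sigma$, and hence linearly dependent, which is absurd. So, after permuting coordinates, $\lambda_n=N$, $\lambda_i=0$ for $i<n$, and $\sigma=c\,x_n$ with $c\in k^\ast$. The rescaling $x_n\mapsto c\,x_n$ is a linear change of coordinates; after it we have $\sigma=x_n$ and $E=Nx_n\frac{\partial}{\partial x_n}$.

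\textbf{Identifying $\delta$.} For $i<n$ we have $x_i\in A=\ker(\delta)$, and $\delta(x_n)=\delta(\sigma)=1$. A derivation of $k[x_1,\dots,x_n]$ is determined by its values on the generators, so $\delta=\frac{\partial}{\partial x_n}$.

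The step I expect to need the most care is the identification of the $E$-eigenspaces with the pieces $A\sigma^m$ (in particular, that eigenvalues outside $N\mathbb{Z}_{\ge0}$ have zero eigenspace). This follows from the directness of the sum in Proposition~\ref{prop:associated-semisimple}. The factorization argument is where unique factorization in $k[x_1,\dots,x_n]$ is used explicitly, and it is exactly the gap the lemma is meant to close.
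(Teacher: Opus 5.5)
Your proof is correct, but it takes a different route from the paper's. The paper never passes to the slice decomposition. It reads $N\sigma\,\delta(x_i)=\lambda_i x_i$ as a divisibility statement: $\lambda_i\neq 0$ gives $\sigma\mid x_i$, hence $\sigma=c_ix_i$. Coprimality of distinct variables then leaves at most one nonzero $\lambda_i$. Finally, $\lambda_n=N$ comes from evaluating $E(\sigma)$ in two ways, and the identity at $i=n$ then gives $\delta(x_n)=1/c$.

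That argument is more elementary, and it never uses local nilpotency of $\delta$. So it holds for any derivation $\delta$ with $\delta(\sigma)=1$. You instead use Theorem~\ref{thm:slice} and Proposition~\ref{prop:associated-semisimple} to identify the eigenspaces of $E$ with the pieces $A\sigma^m$. This needs the LND hypothesis, but it buys some things:
\begin{itemize}
\item[(a)] every $\lambda_i$ automatically lies in $N\mathbb{Z}_{\ge 0}$;
\item[(b)] $x_i\in\ker\delta$ whenever $\lambda_i=0$;
\item[(c)] you make explicit two steps the paper's write-up skips.
\end{itemize}
The first skipped step is that the $\lambda_i$ cannot all vanish, which is needed to get \emph{exactly} one nonzero eigenvalue. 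The second is that $\delta(x_i)=0$ for $i<n$, which is what actually forces $\delta=\frac{\partial}{\partial x_n}$; in the paper's approach this follows from $N\sigma\,\delta(x_i)=0$ in a domain.

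A minor point: neither argument really uses unique factorization. Irreducibility of each $x_i$, together with the linear independence of distinct variables, is all that is needed.
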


\begin{proof}
Since $E=N\sigma\delta$, for each $i$ we have
$N\sigma\,\delta(x_i)=\lambda_i x_i$.
When $\lambda_i\neq 0$ this holds in the UFD $k[x_1,\dots,x_n]$,
so $\sigma$ divides $\lambda_i x_i$.  Since $x_i$ is irreducible
and $\lambda_i\in k^*$, we conclude $\sigma\mid x_i$.  As
$\delta(\sigma)=1$ and derivations vanish on constants,
$\sigma\notin k^*$, so $\sigma=c_i x_i$ for some $c_i\in k^*$.

If two indices $i\neq j$ had $\lambda_i\neq 0$ and
$\lambda_j\neq 0$, then $\sigma$ would divide both $x_i$ and
$x_j$.  Since $\gcd(x_i,x_j)=1$ in the UFD, this would force
$\sigma\in k^*$, a contradiction.  Hence exactly one eigenvalue
is nonzero; after reordering variables we may assume
$E=\lambda_n x_n\frac{\partial}{\partial x_n}$ — this is the linear
change of coordinates referred to in the statement.

From the above, $\sigma=cx_n$ for some $c\in k^*$.  Evaluating
$E(\sigma)$ in two ways: on one hand $E(\sigma)=\lambda_n cx_n$,
on the other $E(\sigma)=N\sigma\delta(\sigma)=N\sigma=Ncx_n$,
giving $\lambda_n=N$.  Then $Ncx_n\delta(x_n)=Nx_n$ yields
$\delta(x_n)=1/c$.  Replacing $x_n$ by $cx_n$ (another linear
change) gives the normal form $\sigma=x_n$, $\delta=\frac{\partial}{\partial x_n}$, $E=Nx_n\frac{\partial}{\partial x_n}$.
\end{proof}

\begin{theorem}\label{thm:linearizability-criterion}
The following are equivalent.
\begin{enumerate}
\item The $\GM$-action $\alpha$ is linearizable.
\item There exists $\phi\in\Aut(B)$ such that
  $\phi D\phi^{-1}=\frac{\partial}{\partial x_n}$ and $\phi(s)=x_n$.
\item There exist $\phi\in\Aut(B)$ and $p\in k[x_1,\dots,x_{n-1}]$
  such that $\phi D\phi^{-1}=\frac{\partial}{\partial x_n}$ and
  $\phi(s)=x_n+p(x_1,\dots,x_{n-1})$.
\end{enumerate}
\end{theorem}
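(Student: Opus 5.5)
I would prove the cycle (2)$\Rightarrow$(3)$\Rightarrow$(1)$\Rightarrow$(2). The implication (2)$\Rightarrow$(3) is trivial: take $p=0$.

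For (3)$\Rightarrow$(1), set $D'=\phi D\phi^{-1}=\frac{\partial}{\partial x_n}$ and $s'=\phi(s)=x_n+p$. Then $\phi\partial\phi^{-1}=Ns'D'$, so $\phi$ conjugates $\alpha$ to the $\GM$-action generated by $N(x_n+p)\frac{\partial}{\partial x_n}$. Next apply the triangular automorphism $\psi$ with $\psi(x_i)=x_i$ for $i<n$ and $\psi(x_n)=x_n-p$. Since $p$ does not involve $x_n$, we have $\psi\frac{\partial}{\partial x_n}\psi^{-1}=\frac{\partial}{\partial x_n}$. Because $\psi(x_n+p)=x_n$, the composite $\psi\phi$ conjugates $\partial$ to $Nx_n\frac{\partial}{\partial x_n}$. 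By the Example in Section~\ref{sec:semisimple}, this is the linear action $(x_1,\dots,x_{n-1},t^Nx_n)$. Under the correspondence of Remark~\ref{rem:Gm-grading}, conjugating the generators conjugates the actions, so $\alpha$ is linearizable. (This step also gives (3)$\Rightarrow$(2) directly, via $\psi\phi$.)

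For (1)$\Rightarrow$(2), suppose $\theta\in\Aut(B)$ conjugates $\alpha$ to a linear $\GM$-action. A linear $\GM$-action on $\A^n$ is diagonalizable, so after composing $\theta$ with a linear map we may take the action diagonal, with generator $E=\sum\lambda_ix_i\frac{\partial}{\partial x_i}$ and $\lambda_i\in\mathbb{Z}$. Then $E=\theta\partial\theta^{-1}=N\sigma\delta$, where $\delta=\theta D\theta^{-1}\in\LND(B)$ and $\sigma=\theta(s)$ satisfies $\delta(\sigma)=1$. Lemma~\ref{lem:linear-factorization} supplies a linear change of coordinates $L$ after which $\delta=\frac{\partial}{\partial x_n}$ and $\sigma=x_n$. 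Taking $\phi=L\theta$ then gives (2).

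I expect the main obstacle to be the bookkeeping in (1)$\Rightarrow$(2). One point is justifying that conjugacy of $\GM$-actions corresponds to conjugacy of infinitesimal generators; I would verify this by applying $\mathrm{ev}_1\circ\frac{d}{dt}$ to the coaction from Remark~\ref{rem:Gm-grading}. The other point is keeping track of how the linear changes of coordinates in Lemma~\ref{lem:linear-factorization} (reordering and rescaling) compose with $\theta$, so that $\phi$ is a genuine automorphism of $B$ rather than just a coordinate relabeling.
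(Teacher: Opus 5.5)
Your proposal is correct and follows the paper's proof essentially step for step: the same cycle of implications, the same triangular automorphism $x_n\mapsto x_n-p$ conjugating $N(x_n+p)\frac{\partial}{\partial x_n}$ to $Nx_n\frac{\partial}{\partial x_n}$, and the same application of Lemma~\ref{lem:linear-factorization} to $E=N\sigma\delta$ for (1)$\Rightarrow$(2). Your additional remarks only make explicit points the paper takes for granted: first diagonalizing the linear action, the correspondence between conjugating actions and conjugating their generators, and reading (3)$\Rightarrow$(2) off directly from $\psi\phi$.
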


\begin{proof}
\emph{$(2)\Rightarrow(3)$}: Immediate with $p=0$.

\smallskip
\emph{$(3)\Rightarrow(1)$}: Set $D'=\phi D\phi^{-1}=\frac{\partial}{\partial x_n}$
and $\sigma=\phi(s)=x_n+p$, so $\phi\partial\phi^{-1}=N\sigma D'=
N(x_n+p)\frac{\partial}{\partial x_n}$.  Define the triangular automorphism
$\tau$ by $\tau(x_i)=x_i$ for $i<n$ and $\tau(x_n)=x_n-p$; its
inverse satisfies $\tau^{-1}(x_n)=x_n+p$.  Since
$p\in k[x_1,\dots,x_{n-1}]$ does not involve $x_n$, for any
$g\in B$ one has
\[
  \tau\bigl[(x_n+p)\cdot g\circ\tau^{-1}\bigr]
  = \tau(x_n+p)\cdot g
  = (x_n-p+p)\cdot g
  = x_n\cdot g.
\]
Applying this with $g=\frac{\partial f}{\partial x_n}$ (using the chain
rule $\frac{\partial(\tau^{-1}f)}{\partial x_n}=\frac{\partial f}{\partial x_n}\circ\tau^{-1}$,
valid since $p$ is independent of $x_n$):
\[
  \Bigl[\tau\circ N(x_n+p)\tfrac{\partial}{\partial x_n}
    \circ\tau^{-1}\Bigr](f)
  = Nx_n\frac{\partial f}{\partial x_n}.
\]
Setting $\psi:=\tau\circ\phi$, we obtain
$\psi\partial\psi^{-1}=Nx_n\frac{\partial}{\partial x_n}$,
which is diagonal linear.  Hence $\alpha$ is linearizable.

\smallskip
\emph{$(1)\Rightarrow(2)$}: Let $\psi\in\Aut(B)$ linearize
$\alpha$, so $E:=\psi\partial\psi^{-1}$ is diagonal linear.  Set
$\delta:=\psi D\psi^{-1}$ and $\sigma:=\psi(s)$.  Then
$E=N\sigma\delta$ and $\delta(\sigma)=\psi(D(s))=1$.  By
Lemma~\ref{lem:linear-factorization}, composing $\psi$ with a
linear change of coordinates gives $\phi\in\Aut(B)$ with
$\phi D\phi^{-1}=\frac{\partial}{\partial x_n}$ and $\phi(s)=x_n$.
\end{proof}

\begin{remark}
Theorem~\ref{thm:linearizability-criterion} applies in particular
when $D$ is nice: condition~(3) then characterizes linearizability
of the nice $\GM$-action via $\phi D\phi^{-1}=\frac{\partial}{\partial x_n}$
and $\phi(s)=x_n+p$ for some $p\in k[x_1,\dots,x_{n-1}]$.  Note
that requiring $\phi(s)$ to have total degree $1$ in \emph{all}
variables is strictly stronger than necessary and would exclude
valid linearizable cases such as $\phi(s)=x_n+x_1^2$; the correct
condition is affine-linearity in $x_n$ alone, with coefficients in
$k[x_1,\dots,x_{n-1}]$ of arbitrary degree.
\end{remark}

\section{Further results and applications}\label{sec:applications}

\subsection{Independence of the choice of slice}

\begin{theorem}\label{thm:slice-independence}
Let $s$ and $s'=s+a$ ($a\in A:=\ker D$) be two slices of $D$,
and let $\partial=NsD$, $\partial'=Ns'D$.  Then
\[
  \phi_a\circ\partial\circ\phi_a^{-1} = \partial',\quad
  \text{where}\quad \phi_a:=\exp(-aD)\in\Aut(B).
\]
In particular, the linearizability of the $\GM$-action does not
depend on the choice of slice.
\end{theorem}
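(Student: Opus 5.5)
The plan is a direct computation built on three facts. First, $\phi_a$ is an automorphism commuting with $D$. Second, it translates the slice by an element of $A$. Third, conjugating $sD$ by such an automorphism just replaces $s$ by its image. Everything takes place in $B=A[s]$ (Theorem~\ref{thm:slice}), and I would use Lemma~\ref{lem:translation-formula} to control $\exp(\pm aD)$.

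\emph{Step 1: $\phi_a$ is an automorphism of $B$ and commutes with $D$.} Since $a\in A=\ker D$, $aD$ is again locally nilpotent. Hence $\exp(-aD)$ is a well-defined $k$-algebra automorphism with inverse $\exp(aD)$. Moreover $[D,aD]=D(a)D=0$, so $D$ commutes with $aD$ and therefore with $\exp(\pm aD)$.

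\emph{Step 2: the action on $A$ and on $s$.} By the argument of Lemma~\ref{lem:translation-formula} with $\lambda$ specialized to $a$ (legitimate because $a$ is $D$-constant), $\exp(-aD)$ fixes $A$ pointwise and sends $P(s)$ to $P(s-a)$. So $\phi_a(s)=s-a$ and $\phi_a^{-1}(s)=s+a$.

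\emph{Step 3: the conjugation.} For $f\in B$, Step 1 gives
\[
\phi_a\bigl(Ns\,D(\phi_a^{-1}f)\bigr)=N\phi_a(s)\,\phi_a D\phi_a^{-1}(f)=N\phi_a(s)\,D(f),
\]
so conjugating $\partial$ by $\phi_a$ yields $N\phi_a^{\pm1}(s)\,D$. The only delicate point, and what I expect to be the main obstacle, is the sign, i.e.\ the composition convention. With the convention $\phi\partial\phi^{-1}$ acting on functions, the result is $N(s-a)D$. To obtain $\partial'=N(s+a)D$ one must read the automorphism as acting on points (pullback reverses composition), or equivalently use $\phi_a^{-1}=\exp(aD)$ in the ring-level formula. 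I would state this convention explicitly and verify the identity with it, noting that $\exp(aD)\circ\partial\circ\exp(-aD)=N(s+a)D=\partial'$ holds unambiguously at the ring level. Either way, $\partial$ and $\partial'$ are conjugate by an explicit automorphism of the form $\exp(\pm aD)$.

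\emph{Step 4: the consequence.} If $\psi\partial\psi^{-1}$ is diagonal linear, then $\psi'=\psi\circ\exp(-aD)$ satisfies $\psi'\partial'\psi'^{-1}=\psi\partial\psi^{-1}$. So the $\GM$-action of $\partial'$ is linearizable exactly when that of $\partial$ is; the symmetric argument with $-a$ gives the converse. This proves independence of the slice.
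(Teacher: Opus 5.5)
Your argument is correct and follows the paper's route: $\exp(-aD)$ commutes with $D$ and fixes $A$, so conjugating $NsD$ only replaces $s$ by its image, and your caution about the sign is fully justified. The paper's proof writes $\phi_a(s)=s-a\,D(s)=s-a=s'$, which contradicts $s'=s+a$; at the ring level $\exp(-aD)\circ\partial\circ\exp(aD)=N(s-a)D$, so the displayed identity actually holds with $\phi_a=\exp(aD)$, as you verify. The slice-independence conclusion is unaffected.
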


\begin{proof}
Since $a\in A=\ker(D)$, the exponential $\exp(-aD)$ is a
well-defined polynomial automorphism.  Using $D(s)=1$ and
$D^j(s)=0$ for $j\ge 2$:
\[
  \phi_a(s) = s - a\,D(s) = s - a = s'.
\]
Note that $\phi_a|_A=\id$, since $\exp(-aD)(b)=b$ for all
$b\in\ker D$ (as $D(b)=0$ makes every term beyond $j=0$ vanish).
To see $\phi_a D\phi_a^{-1}=D$: algebraically, $[D,D^j]=0$ for
all $j\ge 0$, so $D$ commutes with every term of $\exp(-aD)$,
giving $D\circ\exp(-aD)=\exp(-aD)\circ D$, i.e.,
$\phi_a D\phi_a^{-1}=D$.  Therefore:
\[
  \phi_a\circ(NsD)\circ\phi_a^{-1}
  = N\phi_a(s)\cdot\phi_a D\phi_a^{-1}
  = Ns'\cdot D = \partial'.\qedhere
\]
\end{proof}

\subsection{Geometric reformulation via the kernel}

\begin{corollary}\label{cor:kernel-criterion}
The $\GM$-action $\alpha$ is linearizable if and only if there
exists $\phi\in\Aut(B)$ such that
\[
  \phi(\ker D) = k[x_1,\dots,x_{n-1}]
  \quad\text{and}\quad
  \phi(s)\in x_n + k[x_1,\dots,x_{n-1}].
\]
\end{corollary}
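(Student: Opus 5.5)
The plan is to derive Corollary~\ref{cor:kernel-criterion} from Theorem~\ref{thm:linearizability-criterion} by showing that its condition is equivalent to condition~(3) there. Both directions reduce to how conjugation by an automorphism transports the kernel and the slice of $D$.

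For ($\Rightarrow$), assume $\alpha$ is linearizable. Theorem~\ref{thm:linearizability-criterion} gives $\phi\in\Aut(B)$ with $\phi D\phi^{-1}=\frac{\partial}{\partial x_n}$ and $\phi(s)=x_n$. I will first check that $\phi(\ker D)=\ker(\phi D\phi^{-1})$. Indeed, $D(b)=0$ holds if and only if $(\phi D\phi^{-1})(\phi(b))=\phi(D(b))=0$. Since $\ker\frac{\partial}{\partial x_n}=k[x_1,\dots,x_{n-1}]$ in characteristic zero, both required conditions follow.

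For ($\Leftarrow$), suppose $\phi(A)=k[x_1,\dots,x_{n-1}]$ with $A=\ker D$, and $\phi(s)=x_n+p$ with $p\in k[x_1,\dots,x_{n-1}]$. Put $\delta:=\phi D\phi^{-1}$. Then $\delta$ is locally nilpotent, $\ker\delta=\phi(A)=k[x_1,\dots,x_{n-1}]$, and $\delta(x_n+p)=\phi(D(s))=1$. Since $\delta(p)=0$, this gives $\delta(x_n)=1$. A derivation of $k[x_1,\dots,x_n]$ is determined by its values on the generators. Here $\delta(x_i)=0$ for $i<n$ and $\delta(x_n)=1$, so $\delta=\frac{\partial}{\partial x_n}$. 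Thus condition~(3) of Theorem~\ref{thm:linearizability-criterion} holds, and the theorem gives linearizability.

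I do not expect a genuine obstacle here. The only point needing care is the converse: the hypothesis controls only the image of the kernel and the slice, not $D$ itself. The observation that resolves this is that $B=A[s]$ is generated by the kernel together with the slice. Its image $k[x_1,\dots,x_{n-1}][x_n+p]$ is all of $B$, so the conjugated derivation is pinned down on a generating set.
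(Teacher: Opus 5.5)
Your proof is correct and follows the same route as the paper: both reduce the statement to condition~(3) of Theorem~\ref{thm:linearizability-criterion} via the identity $\phi(\ker D)=\ker(\phi D\phi^{-1})$. The paper's one-line proof makes explicit only the direction from~(3) to the kernel condition; your argument for the converse (that $\ker\delta=k[x_1,\dots,x_{n-1}]$ and $\delta(x_n+p)=1$ force $\delta(x_n)=1$, hence $\delta=\frac{\partial}{\partial x_n}$) correctly supplies the step the paper leaves implicit.
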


\begin{proof}
Condition~(3) of Theorem~\ref{thm:linearizability-criterion}
requires $\phi D\phi^{-1}=\frac{\partial}{\partial x_n}$, whose kernel
is $k[x_1,\dots,x_{n-1}]$.  Since $\phi(\ker D)=\ker(\phi
D\phi^{-1})$, the result follows.
\end{proof}

The criterion says: the action is linearizable precisely when the
kernel of $D$ can be mapped onto a coordinate hyperplane ring and
the slice to an affine translate of the complementary variable.

\subsection{Complete answer for $n\le 3$}

We now apply the Main Theorem to resolve Freudenburg's question
completely in dimensions $n=1$, $2$, and $3$.  In each case the
key input is either a classical structure theorem for LNDs (the
additive side) or a linearization theorem for $\GM$-actions (the
multiplicative side), and the Main Theorem converts one into the
other.

For $n=1$ the statement is immediate: $k[x]$ admits only the
derivation $\frac{\partial}{\partial x}$ up to scalar, and every LND with
slice is already $\frac{\partial}{\partial x}$ itself.

For $n=2$, by Gutwirth's theorem~\cite{Gut62} every $\GM$-action on
$\A^2$ is linearizable, and by
Rentschler's theorem~\cite[Theorem~5.1.5]{Fre06} every LND of
$k[x,y]$ is locally triangularizable.  Combined with the Jung--van
der Kulk theorem~\cite[Theorem~5.1.10]{Fre06}, every LND of
$k[x,y]$ with slice is conjugate to $\frac{\partial}{\partial y}$ by a
tame automorphism, with the slice conjugated to $y+p(x)$.  By
Theorem~\ref{thm:linearizability-criterion}(3), the associated
$\GM$-action is linearizable.

For $n=3$ we give two independent proofs, reflecting the
additive--multiplicative duality of the problem.

\emph{Additive proof.}  By Wang's
classification~\cite[Proposition~4.6]{Wan05}
(Theorem~\ref{thm:wang-nice} above), every nice LND $D$ of
$k[x,y,z]$ satisfies $\rank(D)=1$ and $\ker D\cong k^{[2]}$.
Since $D$ admits a slice, the Slice Theorem gives
$B=\ker(D)[s]\cong k^{[3]}$, and in the resulting coordinate
system $D=\frac{\partial}{\partial z}$.  By
Theorem~\ref{thm:linearizability-criterion}(2), $\alpha$ is
linearizable.  The same argument, using Miyanishi's
theorem~\cite[Theorem~1.6]{Wan05} in place of
Theorem~\ref{thm:wang-nice}, extends to the strictly larger class
$D^2x=D^2y=0$ (Corollary~\ref{cor:wang-general} below).

\emph{Multiplicative proof (over $k=\mathbb{C}$).}  By the
linearization theorem of Kaliman, Koras, Makar-Limanov, and
Russell~\cite{KKMR97,KR97}, every algebraic $\C^*$-action on
$\C^3$ is linearizable.  In particular $\alpha$ is linearizable,
and Theorem~\ref{thm:linearizability-criterion} then yields
$\phi D\phi^{-1}=\frac{\partial}{\partial z}$.

\begin{remark}
The two proofs for $n=3$ are not redundant: Wang's applies over
any algebraically closed field of characteristic zero, while KKMR
gives the stronger statement that \emph{every} $\GM$-action on
$\A^3$ is linearizable (not only nice ones arising from LNDs with
slice).  The proof of the KKMR theorem required building the list
of Koras--Russell threefolds and distinguishing them from $\C^3$
using the Makar-Limanov invariant; Wang's proof is a direct
algebraic argument with filtrations and degree functions.
\end{remark}

We state the results for $n=2$ and $n=3$ formally for reference.

\begin{corollary}\label{cor:dim2}
Let $B=k[x,y]$ and $D\in\LND(B)$ nice with slice $s$.  Then the
associated $\GM$-action on $\A^2$ is linearizable, and $D$ is
conjugate to $\frac{\partial}{\partial y}$.
\end{corollary}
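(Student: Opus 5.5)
The plan is to go through condition~(3) of Theorem~\ref{thm:linearizability-criterion}, so we need $\phi\in\Aut(k[x,y])$ with $\phi D\phi^{-1}=\frac{\partial}{\partial y}$ and $\phi(s)=y+p(x)$. Niceness is not used beyond the hypothesis being in force; the existence of a slice alone will suffice. Set $A=\ker D$. By the Slice Theorem (Theorem~\ref{thm:slice}), $B=A[s]$. Since $\operatorname{trdeg}_k B=2$, the kernel $A$ has transcendence degree $1$.

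\textbf{Step 1: determine $A$.} By Rentschler's theorem~\cite[Theorem~5.1.5]{Fre06}, there is $\theta\in\Aut(B)$ with $\theta D\theta^{-1}=f(x)\frac{\partial}{\partial y}$ for some $f\in k[x]$. Its kernel is $k[x]$, so $\theta(A)=k[x]$ and $A=k[u]$ with $u=\theta^{-1}(x)$ a variable of $B$.

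\textbf{Step 2: show $f$ is constant.} The element $\theta(s)$ is a slice of $f(x)\frac{\partial}{\partial y}$, so $f(x)\,\frac{\partial\theta(s)}{\partial y}=1$. Hence $f\in k^*$.

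\textbf{Step 3: normalize.} Rescale $y$ by the linear automorphism $y\mapsto f^{-1}y$ (composing with $\theta$) to get $\phi$ with $\phi D\phi^{-1}=\frac{\partial}{\partial y}$. Then $\phi(s)$ is a slice of $\frac{\partial}{\partial y}$. By Remark~\ref{rem:slice-family}, it differs from $y$ by an element of $\ker\frac{\partial}{\partial y}=k[x]$, i.e.\ $\phi(s)=y+p(x)$.

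\textbf{Step 4: conclude.} Theorem~\ref{thm:linearizability-criterion}, (3)$\Rightarrow$(1), gives linearizability of the $\GM$-action. Its implication (1)$\Rightarrow$(2) upgrades this to $\phi(s)=y$ as well, which gives the conjugacy to $\frac{\partial}{\partial y}$.

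The only substantive input is Rentschler's triangularization. The point to check is that his theorem gives a global conjugate of the form $f(x)\frac{\partial}{\partial y}$, not merely a local triangular form. If only the kernel statement were available, the fallback would be: $A=k[u]$ is a polynomial ring since it is a factorially closed subring of transcendence degree $1$ in a UFD. Then $B=k[u,s]$, so $\phi\colon u\mapsto x,\ s\mapsto y$ is an automorphism with $\phi D\phi^{-1}=\frac{\partial}{\partial y}$ directly. In that route, Jung--van der Kulk~\cite[Theorem~5.1.10]{Fre06} is needed only to note that $\phi$ is tame, not for the conclusion itself. Gutwirth's theorem~\cite{Gut62} gives an independent check of linearizability.
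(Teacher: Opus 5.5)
Your proposal is correct and takes the same route as the paper. Rentschler's theorem conjugates $D$ to $\frac{\partial}{\partial y}$ with the slice sent to $y+p(x)$, and then (3)$\Rightarrow$(1) of Theorem~\ref{thm:linearizability-criterion} finishes; you also make explicit what the paper leaves implicit, namely that the slice forces $f\in k^*$ and that Remark~\ref{rem:slice-family} gives $\phi(s)-y\in k[x]$. Two small points: under the paper's convention $\phi D\phi^{-1}$, the rescaling automorphism should send $y\mapsto fy$ (your $y\mapsto f^{-1}y$ is the matching change of coordinate), and conjugacy to $\frac{\partial}{\partial y}$ is already established at Step~3, so appealing to (1)$\Rightarrow$(2) there is unnecessary.
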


\begin{proof}
By Rentschler's theorem~\cite[Theorem~5.1.5]{Fre06} and the
Jung--van der Kulk theorem~\cite[Theorem~5.1.10]{Fre06}, $D$ is
conjugate to $\frac{\partial}{\partial y}$ with slice conjugated to
$y+p(x)$.  Apply Theorem~\ref{thm:linearizability-criterion}(3).
\end{proof}

\begin{theorem}[Wang~{\cite[Proposition~4.6]{Wan05}}]
\label{thm:wang-nice}
Let $D\in\LND(k[x,y,z])$ be nonzero with $D^2x=D^2y=D^2z=0$.
Then $\ker D$ contains a nonzero linear form in $\{x,y,z\}$ and
$\rank(D)=1$.  If moreover $D$ is irreducible, there exists a
coordinate system $(x',y',z')$ of $k[x,y,z]$ related to
$(x,y,z)$ by a linear change of variables such that
\[
  D = f(x')\frac{\partial}{\partial y'} +
      g(x')\frac{\partial}{\partial z'}
\]
with $f,g\in k[x']$ and $\gcd(f,g)=1$.  In all cases
$\ker D\cong k^{[2]}$.
\end{theorem}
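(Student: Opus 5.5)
The plan is to work throughout with $A:=\ker D$ and the three elements $a:=D(x)$, $b:=D(y)$, $c:=D(z)$. The hypothesis $D^2x=D^2y=D^2z=0$ says exactly that $a,b,c\in A$. It follows at once that $D(xb-ya)=ab-ba=0$, and similarly for the other two, so the three Wronskian-type elements
\[
u_1=yc-zb,\quad u_2=za-xc,\quad u_3=xb-ya
\]
lie in $A$ and satisfy $xu_1+yu_2+zu_3=0$. The first step, and the one I expect to be the main obstacle, is to show that $a,b,c$ are linearly dependent over $k$. If $\lambda a+\mu b+\nu c=0$ with $(\lambda,\mu,\nu)\neq 0$, then the nonzero linear form $\lambda x+\mu y+\nu z$ lies in $A$.

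To attack linear dependence I would pass to the associated graded derivation for the standard degree. Let $d-1$ be the maximal degree among $a,b,c$, and let $\bar D$ be the homogeneous derivation of degree $d-1$ formed from the top-degree components. Then $\bar D$ is again locally nilpotent and still satisfies $\bar D^2x=\bar D^2y=\bar D^2z=0$. In the homogeneous case, $\bar a,\bar b,\bar c$ are forms of equal degree lying in $\ker\bar D$, and $\ker\bar D=k[f,g]$ with $f,g$ homogeneous by Miyanishi's theorem. I would then use Daigle's Jacobian description $\bar D=h\,\mathrm{Jac}(f,g,\cdot)$ with $h\in\ker\bar D$, together with the relation $x\bar u_1+y\bar u_2+z\bar u_3=0$. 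Combining these with a degree count in $k[f,g]$ should force $\bar a,\bar b,\bar c$ to span at most a one-dimensional space. I would then lift the resulting linear relation back to $D$ by induction on $d$, subtracting the leading relation and rerunning the argument. If this lifting fails, the fallback is to compare transcendence degrees: the subalgebra $k[a,b,c,u_1,u_2,u_3]\subseteq A$ has transcendence degree at most $2$, and independence of $a,b,c$ should contradict this.

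Once a linear form is in $A$, I make a linear change of variables so that it becomes $x'$ with $D(x')=0$, completing it to a linear coordinate system $(x',y',z')$. Next I extend $D$ to the $K$-derivation of $K[y',z']$, where $K=k(x')$. This extension is locally nilpotent and nice, so by Rentschler over $K$ its kernel is $K[w]$ for a variable $w$, and $D(y'),D(z')\in K[w]$. Because $D$ lowers the $w$-degree in the triangular form, these images must be constants in $K$. Clearing denominators, and using $D(y'),D(z')\in k[x',y',z']$, gives $D=f(x')\partial_{y'}+g(x')\partial_{z'}$ with $f,g\in k[x']$. This already yields the bound $\rank(D)\le 2$. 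To conclude $\rank(D)=1$, I would show that $gy'-fz'$ becomes a variable after factoring out $\gcd(f,g)$, which reduces the question to $D=h(x')\partial_{z''}$ up to the linear normalisation. This step needs care, and I would check it against the paper's intended notion of rank.

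Finally, if $D$ is irreducible then $\gcd(f,g)=1$. In general I write $D=e(x')D_0$ with $D_0$ irreducible, so that $\ker D=\ker D_0$. A direct check shows $\ker D_0=k[x',\,g_0y'-f_0z']$, using that the fibres over $x'$ are lines and that $\gcd(f_0,g_0)=1$. This gives $\ker D\cong k^{[2]}$.
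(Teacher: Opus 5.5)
The paper gives no proof of this statement; it is quoted from Wang's Proposition~4.6. Your argument therefore has to stand on its own, and it has two genuine gaps.

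The decisive gap is your first step. The $k$-linear dependence of $a=Dx$, $b=Dy$, $c=Dz$ is the real content of Wang's result, and you never establish it.

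\emph{Homogeneous case.} You only assert it (``should force''). It can be rescued: in $\bar D=\bar h\,\mathrm{Jac}(f,g,\cdot)$ the three minors $\mathrm{Jac}(f,g,x)$, $\mathrm{Jac}(f,g,y)$, $\mathrm{Jac}(f,g,z)$ lie in $k[f,g]$ by factorial closedness. The forms of degree $\deg f+\deg g-2$ in $k[f,g]$ span at most two dimensions, which gives dependence of $\bar a,\bar b,\bar c$.

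\emph{The lift fails.} A relation $\lambda\bar a+\mu\bar b+\nu\bar c=0$ among leading forms only yields a linear form $\ell$ with $\deg D(\ell)<d-1$, not $D(\ell)=0$. The other images keep degree $d-1$, so $d$ and $\bar D$ are unchanged, and ``rerunning the argument'' produces nothing. Dependence of leading forms says nothing about dependence of the polynomials themselves (compare $x^2$, $x^2+y$, $z$). Some genuinely new use of the niceness of the full $D$ is needed here.

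\emph{The fallback is void.} Any three elements of $A\cong k^{[2]}$ are algebraically dependent, and transcendence degree never converts this into linear dependence.

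The second gap is in your third step. Over $K=k(x')$, Rentschler gives $D=h(w)\,(w_{z'}\partial_{y'}-w_{y'}\partial_{z'})$ with $\ker D=K[w]$. Niceness forces $w_{y'},w_{z'}\in K[w]$, hence $\deg w=1$: a variable of degree $m\ge 2$ has a partial derivative of degree $m-1$, a degree that does not occur in $K[w]$. But nothing makes $h(w)$ constant, so your claim that $D(y'),D(z')\in K$ is false.

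For example, $D=(xz+1)\partial_y$ is a nice LND with $x\in\ker D$ and $D(y)=xz+1\notin k(x)$. Since $D$ maps every linear form into $k\cdot(xz+1)$, no linear change of coordinates puts it in the form $f(x')\partial_{y'}+g(x')\partial_{z'}$. So both your normal form for arbitrary $D$ and the factorisation $D=e(x')D_0$ are wrong.

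The correct route is as follows.
\begin{enumerate}
\item Normalise $w=\lambda y'+\mu z'$ with $\lambda,\mu\in k[x']$ coprime, so that $D=h(w)(\mu\partial_{y'}-\lambda\partial_{z'})$.
\item A B\'ezout relation $p\lambda+q\mu=1$ gives $h(w)=qD(y')-pD(z')\in k[x',y',z']$.
\item Hence irreducibility forces $h\in k^*$, which is exactly the stated normal form with $\gcd(f,g)=1$.
\item For arbitrary $D$, write $D=eD_0$ with $e$ a gcd of $Dx,Dy,Dz$. Here $e$ can be any element of $\ker D_0$, not necessarily of $k[x']$.
\item $D_0$ is again nice with $\ker D_0=\ker D$, so your arguments for $\rank=1$ and for $\ker D_0=k[x',g_0y'-f_0z']\cong k^{[2]}$ apply to $D_0$. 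Both statements depend only on the kernel, so they hold for $D$.
\end{enumerate}
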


\begin{theorem}[Wang~{\cite[Theorem~4.7]{Wan05}}]
\label{thm:wang-semi-nice}
Let $D\in\LND(k[x,y,z])$ be irreducible with $D^2x=D^2y=0$.
Then one of the following holds:
\begin{enumerate}
\item There exists a coordinate system $(L_1,L_2,z)$ of
  $k[x,y,z]$, with $L_1,L_2$ linear forms in $x,y$, such that
  $D(L_1)=0$, $D(L_2)\in k[L_1]$, and $D(z)\in k[L_1,L_2]$.
\item There exists a coordinate system $(v,x,y)$ such that
  $D(v)=0$ and $Dx,Dy\in k[v]$.  In this case $D$ has a slice
  and $\rank(D)=1$.
\end{enumerate}
\end{theorem}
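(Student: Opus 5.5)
The statement above is Wang's structure theorem, quoted from~\cite{Wan05}. If I were to reprove it, I would organize everything around the two kernel elements $a:=D(x)$ and $b:=D(y)$. Since $D^2x=D^2y=0$, both lie in $A:=\ker D$. One checks at once that
\[
  v:=b\,x-a\,y\in A,
\]
because $D(v)=ba-ab=0$. By Miyanishi's theorem~\cite[Theorem~1.6]{Wan05}, $A\cong k^{[2]}$. The proof then splits according to whether $a$ and $b$ are linearly dependent over $k$.

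\emph{Case 1: $a,b$ are $k$-linearly dependent.} Then some nonzero linear form $L_1$ in $x,y$ satisfies $D(L_1)=0$. Choose a complementary linear form $L_2$, so that $(L_1,L_2,z)$ is a coordinate system and $c:=D(L_2)\in A$.

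First I would show $c\in k[L_1]$. The plan is to compare degrees in $L_2$ and $z$: since $c\in A$, applying $D$ to $c$ yields a relation involving $D(z)$. Then $\gcd$-type arguments in the UFD $k[L_1,L_2,z]$, combined with the irreducibility of $D$, should force $c$ to be free of $L_2$ and $z$.

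Next I would pass to $K:=k(L_1)$ and view $D$ as a locally nilpotent derivation of $K[L_2,z]$. By Rentschler's theorem over $K$ it is triangularizable, with $D(L_2)=c\in K^*$ or $c=0$. From this I would extract $D(z)\in k[L_1,L_2]$, again using irreducibility to clear denominators. This yields alternative~(1).

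\emph{Case 2: $a,b$ are $k$-linearly independent.} Here I would show that $v$ is a variable of $k[x,y,z]$ with $a,b\in k[v]$; then $(v,x,y)$ is a coordinate system as in~(2).

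The route I would try is through the localization $B_a=A_a[x]$: $x/a$ is a slice there, so $B_a=A_a[x]$, and $y=(bx-v)/a$. Hence $A_a=k[v,a,b,\dots]_a$, and $z$ must be expressible over $A_a[x]$. Using a degree function on $B$ in the style of Wang's filtrations, I would aim to prove that $A=k[v,w]$ for some $w$, with $a,b\in k[v]$. Once that holds, $D(x)\in k[v]$ and $D(y)\in k[v]$.

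Irreducibility then gives $\gcd(a,b)=1$ in $k[v]$. A B\'ezout combination $s=fx+gy$ with $fa+gb=1$ is a slice, and $\rank D=1$ follows from the coordinate form.

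\emph{Main obstacle.} I expect the hardest step to be the claim in Case~2 that $v$ is a coordinate and that $a,b$ lie in $k[v]$; showing only that they lie in $A$ is not enough. My first attempt would be a weighted-degree argument that assigns $x,y$ the degree of their $D$-orbit and shows the leading form of $v$ already generates. Failing that, I would fall back on the Abhyankar--Moh--Suzuki-type characterization of variables in the two-dimensional kernel $A$.
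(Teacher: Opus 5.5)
The paper gives no proof of this statement; it quotes it from Wang. So your sketch has to stand on its own, and Case~2 does not. There you try to show that $v=bx-ay$ itself is a variable with $a,b\in k[v]$, and that $(v,x,y)$ is a coordinate system. That target is false whatever method you use. If $a=\alpha(v)$ and $b=\beta(v)$, then $v=\beta(v)x-\alpha(v)y$. So the polynomial $V-\beta(V)X+\alpha(V)Y$ vanishes at $(v,x,y)$, and it is nonzero because its monomial $V$ has coefficient $1$. This contradicts the algebraic independence of a coordinate system.

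You have swapped the two generators of the kernel. In alternative~(2), with $Dx=a(v)$, $Dy=b(v)$ and $\gcd(a,b)=1$, the B\'ezout element $s=fx+gy$ makes $(v,s,bx-ay)$ a coordinate system, and $\ker D=k[v,\,bx-ay]$. Thus $bx-ay$ is the \emph{other} generator: $a$ and $b$ lie in $k[v]$, not in $k[bx-ay]$. Moreover, $k[x,y][v]=k[x,y][z]$ forces the correct $v$ to have the form $\lambda z+h(x,y)$ with $\lambda\in k^*$.

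Producing such a kernel element with $Dx,Dy\in k[v]$ is the whole content of Wang's theorem in this case. In particular it requires showing that $a$ and $b$ are algebraically dependent. Your appeal to ``a degree function in the style of Wang's filtrations'' or to Abhyankar--Moh--Suzuki is a placeholder for that argument, not a step of it, and the localization $B_a=A_a[x]$ alone cannot supply it. Your closing B\'ezout/slice/rank step is correct once the right $v$ is in hand.

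Case~1 can be repaired, but its first step is only asserted: ``gcd-type arguments \dots should force'' $c\in k[L_1]$ is not a proof. Irreducibility is genuinely needed here. For $D(L_1)=D(z)=0$ and $D(L_2)=z$, one has $D^2x=D^2y=0$ but $c=z$. Do the Rentschler step first:
\begin{itemize}
\item By Gauss's lemma, $D$ remains irreducible on $K[L_2,z]$, where $K=k(L_1)$.
\item Rentschler's theorem then gives $K$-coordinates $(w,u)$ with $D=\partial/\partial u$.
\item Since $D^2L_2=0$, we get $L_2=g_0(w)+g_1(w)u$.
\item Since $L_2$ is a variable, $g_1\in K$: a root $w_0$ of a nonconstant $g_1$ would make $L_2-g_0(w_0)$ reducible over $\bar K$.
\end{itemize}
Hence $c=g_1\in K\cap B=k[L_1]$. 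Your slice argument then gives $D(z)\in K[L_2]\cap B=k[L_1,L_2]$, as you intended.
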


\begin{corollary}\label{cor:dim3-wang}
Let $D\in\LND(k[x,y,z])$ be nice and admit a slice $s$.  Then
$D$ is automorphically conjugate to $\frac{\partial}{\partial z}$, and
the associated $\GM$-action on $\A^3$ is linearizable.
\end{corollary}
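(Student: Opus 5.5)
The plan is to combine the Slice Theorem with Wang's result (Theorem~\ref{thm:wang-nice}) and then invoke Theorem~\ref{thm:linearizability-criterion}. Set $A=\ker D$. Since $D$ is nice, Theorem~\ref{thm:wang-nice} gives $A\cong k^{[2]}$, so $A=k[u,v]$ for some $u,v\in A$ algebraically independent over $k$. By Theorem~\ref{thm:slice}, $B=A[s]=k[u,v,s]$. Thus $(u,v,s)$ generates $k[x,y,z]$ as a $k$-algebra. Three generators of a polynomial ring in three variables that are algebraically independent (they are, since $s$ is transcendental over $A$) form a coordinate system. Concretely, the $k$-algebra map $\phi\colon B\to B$ with $u\mapsto x$, $v\mapsto y$, $s\mapsto z$ is well defined and bijective: it is the inverse of the surjective endomorphism $x\mapsto u$, $y\mapsto v$, $z\mapsto s$, which is injective by algebraic independence. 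So $\phi\in\Aut(B)$.

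Next I compute $\phi D\phi^{-1}$ on the generators. It kills $x=\phi(u)$ and $y=\phi(v)$, because $D(u)=D(v)=0$, and it sends $z=\phi(s)$ to $\phi(D(s))=\phi(1)=1$. A derivation of $k[x,y,z]$ is determined by its values on $x,y,z$, so $\phi D\phi^{-1}=\frac{\partial}{\partial z}$. Since also $\phi(s)=z$, this is condition~(2) of Theorem~\ref{thm:linearizability-criterion} with $x_3=z$. That theorem then yields linearizability of $\alpha$. Concretely, $\phi\partial\phi^{-1}=Nz\frac{\partial}{\partial z}$.

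The only substantive input is $\ker D\cong k^{[2]}$, which is exactly where niceness enters, through Wang's Proposition~4.6. Everything after that is the formal observation that a polynomial extension of a polynomial ring in two variables is a coordinate system. The one point to check carefully is that generating $B$ together with algebraic independence really produces an automorphism. This is immediate, since a surjective endomorphism between polynomial rings with matching variables is an isomorphism once its images are algebraically independent. As a remark, the case of a reducible $D$ causes no trouble, because the conclusion $\ker D\cong k^{[2]}$ is stated in all cases of Theorem~\ref{thm:wang-nice}. In any event, a derivation with a slice is automatically irreducible: if $D=cD_0$ with $c$ a nonconstant element, then $cD_0(s)=1$ would force $c$ to be a unit.
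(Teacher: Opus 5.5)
Your argument is correct and follows the paper's route: Theorem~\ref{thm:wang-nice} gives $\ker D=k[u,v]$, the Slice Theorem gives $B=k[u,v,s]$, and condition~(2) of Theorem~\ref{thm:linearizability-criterion} finishes the proof. Your explicit automorphism $\phi$ with $u\mapsto x$, $v\mapsto y$, $s\mapsto z$ is a more careful version of the paper's step of relabeling $s=z$ and taking $\phi=\id$; also, as Corollary~\ref{cor:wang-general} hints, niceness is used only to obtain $\ker D\cong k^{[2]}$, which Miyanishi's theorem provides for every nonzero LND of $k^{[3]}$.
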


\begin{proof}
By Theorem~\ref{thm:wang-nice}, $\rank(D)=1$ and
$\ker D\cong k^{[2]}$.  By the Slice Theorem, $B=\ker(D)[s]$, so
identifying $s$ as the free variable over $\ker D=k[x_2,x_3]$
(via the universal property of polynomial rings) gives a
coordinate system in which $D=\frac{d}{ds}$; relabeling $s=z$
gives $D=\frac{\partial}{\partial z}$.
Condition~(2) of Theorem~\ref{thm:linearizability-criterion}
holds with $\phi=\id$.
\end{proof}

\begin{corollary}\label{cor:wang-general}
Let $D\in\LND(k[x,y,z])$ be irreducible with $D^2x=D^2y=0$ and
admit a slice.  Then $D$ is conjugate to $\frac{\partial}{\partial z}$,
and the associated $\GM$-action is linearizable.
\end{corollary}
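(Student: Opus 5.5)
The plan is to mirror the proof of Corollary~\ref{cor:dim3-wang}, using Theorem~\ref{thm:wang-semi-nice} in place of Theorem~\ref{thm:wang-nice}. The aim is to show that $\ker D\cong k^{[2]}$. Once this is known, the Slice Theorem~\ref{thm:slice} gives $B=\ker(D)[s]$. Choosing $u,v$ with $\ker D=k[u,v]$, the triple $(u,v,s)$ is then a coordinate system of $k[x,y,z]$. In these coordinates $D=\frac{d}{ds}$, since $D$ kills $u,v$ and $D(s)=1$. The $k$-automorphism $\phi$ sending $u\mapsto x$, $v\mapsto y$, $s\mapsto z$ then satisfies $\phi D\phi^{-1}=\frac{\partial}{\partial z}$ and $\phi(s)=z$. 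Condition~(2) of Theorem~\ref{thm:linearizability-criterion} therefore holds, and linearizability follows.

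To get $\ker D\cong k^{[2]}$, I split according to the two cases of Theorem~\ref{thm:wang-semi-nice}.

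\emph{Case (2).} There are coordinates $(v,x,y)$ with $D(v)=0$ and $Dx,Dy\in k[v]$, so $D=f(v)\partial_x+g(v)\partial_y$.
\begin{enumerate}
\item Since $D$ has a slice, the image $D(B)$ contains $1$. Moreover $D(B)\subseteq (f,g)k[v,x,y]$, so $(f,g)=k[v]$.
\item Choose $a,b\in k[v]$ with $af+bg=1$. The matrix with rows $(a,b)$ and $(-g,f)$ has determinant $1$ over $k[v]$.
\item It therefore defines a $k[v]$-automorphism with $x'=ax+by$ and $y'=-gx+fy$. In these coordinates $D(x')=1$ and $D(y')=0$, so $D=\partial_{x'}$ in $(v,y',x')$.
\end{enumerate}
This gives $\ker D=k[v,y']\cong k^{[2]}$ directly.

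\emph{Case (1).} There are coordinates $(L_1,L_2,z)$ with $D(L_1)=0$, $D(L_2)=h(L_1)$ and $D(z)=q(L_1,L_2)$, so $D$ is triangular.
\begin{enumerate}
\item If $h\neq0$, then $D(L_2)$ and $D(z)$ both lie in $(h)$ once $q$ is reduced modulo $h$ appropriately. I expect, however, that the image ideal argument is the right tool: $1\in D(B)$ must force $h$ to be a unit after adjustment.
\item More robustly, I would use the standard fact that a triangular LND of $k^{[3]}$ has kernel $k^{[2]}$ when it has a slice. The route is to reduce modulo the prime factors of $h$.
\item If some root $c$ of $h$ exists, then modulo $(L_1-c)$ the derivation becomes $\bar q\,\partial_z$. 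This still has a slice, so $\bar q$ is a nonzero constant.
\item Rather than chase this, I plan to invoke Miyanishi's theorem~\cite[Theorem~1.6]{Wan05}, which is cited in the paper. It states that the kernel of any nonzero LND of $k^{[3]}$ is $k^{[2]}$.
\end{enumerate}
With Miyanishi's theorem, Case~(1), and indeed both cases, are settled at once. The Wang dichotomy then only serves to make the coordinates explicit.

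The main obstacle is therefore only justifying $\ker D\cong k^{[2]}$ in Case~(1) without circularity. I would simply quote Miyanishi's theorem for this. Irreducibility and $D^2x=D^2y=0$ are then not essential beyond matching the paper's framing. Everything else is the formal Slice Theorem argument already used in Corollary~\ref{cor:dim3-wang}.
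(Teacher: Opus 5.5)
Your proposal is correct and is essentially the paper's argument. Miyanishi's theorem gives $\ker D=k[u,v]$ and the Slice Theorem gives $B=k[u,v,s]$, so $D=\frac{\partial}{\partial s}$ in the coordinates $(u,v,s)$, and your explicit $\phi$ ($u\mapsto x$, $v\mapsto y$, $s\mapsto z$) verifies condition~(2) of Theorem~\ref{thm:linearizability-criterion}. The split into Wang's two cases, including the abandoned sketch in Case~(1), is superfluous once Miyanishi is invoked, as the paper itself remarks; your unimodular-row argument in Case~(2) is nonetheless a correct, Miyanishi-free treatment of that case.
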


\begin{proof}
Let $D\in\LND(k[x,y,z])$ be irreducible with $D^2x=D^2y=0$ and
admit a slice $s$.  By the Slice Theorem,
$B=\ker(D)[s]$.  By Miyanishi's theorem~\cite[Theorem~1.6]{Wan05},
$\ker(D)\cong k^{[2]}$, so $\ker D=k[u,v]$ for some $u,v\in B$,
and $B=k[u,v][s]\cong k^{[3]}$.  In the coordinate system
$(u,v,s)$, $D$ acts as $\frac{\partial}{\partial s}$, i.e., $D$ is the
standard derivation in $s$.  Renaming $s=z$ (and relabeling),
we obtain $D=\frac{\partial}{\partial z}$ in the coordinates $(u,v,z)$.
This is condition~(2) of
Theorem~\ref{thm:linearizability-criterion}, so the associated
$\GM$-action is linearizable.

Note that this argument uses only the existence of a slice and
Miyanishi's theorem; it does not require further case analysis
from Theorem~\ref{thm:wang-semi-nice}.
\end{proof}

\subsection{State of the problem in dimension $n\ge 4$}

\begin{remark}
The linearization problem for $\GM$-actions on $\A^n$ with
$n\ge 4$ is open.  The present work shows that any
counterexample to Freudenburg's question, if it exists, must
arise from a LND with slice on $k[x_1,\dots,x_n]$, $n\ge 4$,
whose kernel is \emph{not} isomorphic to a coordinate hyperplane
ring in a way compatible with the slice.  Wang's methods apply
in higher dimensions only under additional assumptions on the
second compositional power $D^2$ of the derivation.
\end{remark}

\subsection{An obstruction to linearizability via the
Makar-Limanov invariant}

For potential applications in dimension $n\ge 4$, we record a
necessary condition for linearizability based on the
Makar-Limanov invariant, which is the same tool used in the KKMR
proof for $n=3$.

\begin{definition}
The \emph{Makar-Limanov invariant} of $B$ is
\[
  \ML(B) := \bigcap_{D\in\LND(B)}\ker(D).
\]
\end{definition}

\begin{proposition}\label{prop:ML-obstruction}
If the $\GM$-action $\alpha$ associated to $(D,s)$ is linearizable,
then $\ML(B)=k$.
\end{proposition}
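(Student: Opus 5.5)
The plan is to use linearizability to produce, through the criterion of Theorem~\ref{thm:linearizability-criterion}, an explicit family of locally nilpotent derivations of $B$ whose kernels meet only in $k$. Since $\ML(B)$ is an intersection of kernels, it suffices to exhibit finitely many elements of $\LND(B)$ with $\bigcap\ker=k$. Linearizability will be used to construct these derivations directly from the pair $(D,s)$, rather than from an a priori coordinate system.

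\emph{Step 1.} Assume $\alpha$ is linearizable. By $(1)\Rightarrow(2)$ of Theorem~\ref{thm:linearizability-criterion} there is $\phi\in\Aut(B)$ with $\phi D\phi^{-1}=\frac{\partial}{\partial x_n}$ and $\phi(s)=x_n$. Set $y_i:=\phi^{-1}(x_i)$. Then $(y_1,\dots,y_n)$ is a system of generators of $B$ adapted to the action, with $y_n=s$ and $D=\phi^{-1}\frac{\partial}{\partial x_n}\phi$. By Corollary~\ref{cor:kernel-criterion}, $\ker D=\phi^{-1}(k[x_1,\dots,x_{n-1}])=k[y_1,\dots,y_{n-1}]$. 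Hence $B=k[y_1,\dots,y_{n-1}][s]$, with the $y_i$ and $s$ algebraically independent.

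\emph{Step 2.} For each $i=1,\dots,n$ define $D_i:=\phi^{-1}\circ\frac{\partial}{\partial x_i}\circ\phi$; note that $D_n=D$. Each $D_i$ is a derivation of $B$, being the conjugate of a derivation by an automorphism. It is locally nilpotent, since $D_i^m=\phi^{-1}\bigl(\frac{\partial}{\partial x_i}\bigr)^m\phi$. Moreover $\ker D_i=\phi^{-1}\bigl(\ker\frac{\partial}{\partial x_i}\bigr)=k[y_1,\dots,\widehat{y_i},\dots,y_n]$.

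\emph{Step 3.} Conclude with
\[
\ML(B)\subseteq\bigcap_{i=1}^n\ker D_i=\phi^{-1}\Bigl(\bigcap_{i=1}^n k[x_1,\dots,\widehat{x_i},\dots,x_n]\Bigr)=\phi^{-1}(k)=k .
\]
The middle equality holds because a polynomial independent of every variable is constant. The reverse inclusion $k\subseteq\ML(B)$ is automatic, since every derivation kills $k$.

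The only point needing care is Step~1. One must check that the automorphism supplied by the criterion really identifies $B$ with a polynomial ring in the generators $y_i$, so that the partial derivatives transported to $B$ make sense. This is exactly the content of $(1)\Rightarrow(2)$ together with Corollary~\ref{cor:kernel-criterion}: the linearizing coordinates are built from $\ker D$ and the slice $s$. Once this is in place, Steps~2 and~3 are formal.
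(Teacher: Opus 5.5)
Your argument is correct and is essentially the paper's: the paper observes that linearizability gives $\Spec(B)\cong\A^n$, then uses that $\ML$ is an isomorphism invariant and that $\ML(k[x_1,\dots,x_n])=k$ via the partial derivatives, and your conjugated derivations $D_i=\phi^{-1}\circ\frac{\partial}{\partial x_i}\circ\phi$ are exactly that transport made explicit. The detour through $(1)\Rightarrow(2)$ and Corollary~\ref{cor:kernel-criterion} is harmless but unnecessary. Any isomorphism of $B$ with a polynomial ring works, and here even $\phi=\id$ does, since $B=k[x_1,\dots,x_n]$ by standing assumption; so Step~1 is not actually where care is needed, and the hypothesis on $(D,s)$ plays no real role.
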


\begin{proof}
A linearizable action means there exists $\phi\in\Aut(B)$ such
that $\phi\partial\phi^{-1}$ is a diagonal linear derivation.
In particular $\Spec(B)\cong\A^n$ as varieties.  Since
$\ML(k[x_1,\dots,x_n])=k$ (the LNDs $\frac{\partial}{\partial x_i}$
have trivial intersection of kernels) and $\ML$ is an isomorphism
invariant, we get $\ML(B)=k$.

Note that $\ML(B)=k$ is not automatic: while it holds for
polynomial rings, a finitely generated $k$-domain $B$ may have
$\ML(B)\supsetneq k$ (e.g., Danielewski surfaces or the
Koras--Russell threefolds), precisely because such $B$ are not
isomorphic to affine space.
\end{proof}

\begin{remark}
Proposition~\ref{prop:ML-obstruction} is useful as follows: to
show that a given nice $\GM$-action on $\Spec(B)$ is
\emph{not} linearizable for $n\ge 4$, it suffices to exhibit a
nonscalar element of $B$ lying in $\ker(D')$ for every LND $D'$
of $B$.  Such an element witnesses $\ML(B)\supsetneq k$ and
rules out linearizability without requiring a complete
classification of $\GM$-actions.  This is precisely the strategy
that Makar-Limanov used to prove $\Spec(B)\not\cong\A^3$ for the
Koras--Russell threefolds.
\end{remark}


\end{document}